\newtheorem{theorem}{Theorem}
\newtheorem{lemma}{Lemma}
\theoremstyle{definition}
\newtheorem{definition}{Definition}
\theoremstyle{remark}
\newtheorem{remark}{Remark}
\title{Parseval's Identity and Values of Zeta Function at 
Even Integers}
\author[$\dag$]{Asghar Ghorbanpour \thanks{The major part of this project was completed while A. Ghorbanpour was a PIMS post-doctoral fellow at the University of Regina.}}
\affil[$\dag$]{Department of Mathematics, 
Western University}
\author[$\ddag$]{ Michelle Hatzel }
\affil[$\ddag$]{Department of Mathematics and    Statistics, 
University of Regina}
\newcommand{\Addresses}{{
  \bigskip
  \footnotesize

  Asghar Ghorbanpour, \textsc{The Department of Mathematics,
Western University,
London, ON, Canada, N6A 5B7.}\par\nopagebreak
  \textit{E-mail address}:\texttt{aghorba@uwo.ca}

  \medskip

  Michelle Hatzel, \textsc{Department of Mathematics and Statistics, 
University of Regina, 
Regina, SK, Canada, S4S 0A2.}\par\nopagebreak
  \textit{E-mail address}: \texttt{hatzel2m@uregina.ca}

}}
\date{\today }
\begin{document}
\maketitle

\abstract{ 
Historically known as the Basel problem, evaluating the Riemann zeta function at two has resulted in numerous proofs, many of which have been generalized to compute the function's values at even positive integers.  
We apply Parseval's identity to the Bernoulli polynomials to find such values.
}

\section{Introduction}
The search for the sum of all the reciprocal squares,
$$\sum_{n=1}^\infty \frac{1}{n^2}=1+\frac{1}{4}+\frac{1}{9}+\frac{1}{16}+\cdots$$
is considered to have begun with Pietro Mengoli (1626-1686), who posed the challenge in \cite{mengoli1650} in 1650. 
Eventually it became known as the Basel problem, largely due to the attention given it by University of Basel professor Jakob Bernoulli (1654-1705). Bernoulli is reported to have written of it, 
``If somebody should succeed in finding what till now withstood our efforts and communicate it to us we shall be much obliged to him.''\footnote{The origin of this statement is attributed to \textit{Tractatus de Seriebus Infinitis}, a collection Bernoulli made of his own work on infinite series that was published in 1689.}

Bernoulli's words convey the difficulty of the Basel problem, but his statement is even more interesting given that Bernoulli himself discovered the key to solving it.  Without knowing the full significance of them, Bernoulli had derived formulae which gave the numbers that would become known as the Bernoulli numbers.  These formulae were published in 1713, in his posthumous text, \textit{Ars Conjectandi}, but it would be Leonhard Euler (1707-1783) who would use these numbers to finally answer Mengoli's challenge.

Euler was made aware of the Basel problem by Johann Bernoulli (1667-1748), his mentor and Jakob's younger brother; in papers presented from 1731-36, Euler expounded an original method of approximation to achieve the exact value of the series and proved that 
\begin{equation*}
\sum_{n=1}^{\infty}\frac{1}{n^2}=\frac{\pi^2}{6}.
\end{equation*}   
In addition, Euler discovered a second technique, which uses infinite products and partial fraction decomposition for approximating values of infinite series (see \cite{Ayoub1974} for details of these publications). 
Euler eventually refined his methods to precisely determine the sums of reciprocal series raised to even powers.
Arising in the computations are the numbers that Bernoulli discovered.  
Denoted as $B_{2k}$, we see Bernoulli numbers in the general formula,
\begin{equation}\label{generalformula}
\sum_{n=1}^\infty \frac{1}{n^{2k}}=
\frac{(-1)^{k-1}\pi^{2k} 2^{2k-1} }{(2k)!}B_{2k}, \quad k\geq 1.
\end{equation}
Up to this day no one knows the exact values of series of reciprocals raised to odd powers.

The Basel problem and the study of infinite series underwent its next significant transformation due to Bernhard Riemann (1826-1866).  
In 1859 Riemann presented \cite{Riemann1859},
which showed a new way of evaluating infinite power series 
through the introduction of the complex variable, $s$, into the sum, 
 $\sum \frac{1}{n^s}$.  
As such, this sum gives a holomorphic function on the right half-plane, $\Re (s) >1$.
Riemann also showed the function  extends to a meromorphic function on the complex plane, with a simple pole at $s=1$.
This function, denoted by $\zeta$, is called the  {\it Riemann zeta function}, where  \eqref{generalformula} gives its values at even integers, $\zeta(2k)$.

Following the success of Euler and with the importance Riemann imparted on it, interest in the Riemann zeta function has continued; different approaches to the Basel problem have led to several elementary methods for finding values of $\zeta (2)$ and $\zeta(2k)$, where $k$ is a positive integer \cite{Estermann1947,Williams1953,Apostol1973,Kalman1993}. 
These approaches are the result of seeing the problem from different perspectives furnished by various branches of mathematics.

We will consider one of these methods from Fourier analysis. 
To evaluate $\sum \frac{1}{n^2}$, Parseval's identity applied to $f(x)=x$ is a common textbook technique 
(for examples, see \cite[p.198]{Rudin1976} and \cite[p.440]{Titchmarsh1939}).
To apply the same approach for even integer values $\zeta (2k)$, for all $k \geq 1$,  
one requires the appropriate function whose absolute value of $n^{th}$ Fourier coefficient is $\frac1{n^k}$.  
We found that a series of functions to do the job is the Bernoulli polynomials.

The history of the Basel problem is much richer than we've been able to present here. 
We encourage the reader to consider the resources for this paper's introduction, in particular \cite{Ayoub1974} and \cite{Knoebel2007}.

We have structured the paper in four sections.  
In section two, by following the original work of Bernoulli,  Bernoulli numbers and polynomials are introduced and some of their properties are studied.
The next section includes a  theory of Fourier coefficients and Parseval's identity.
Then a brief geometrical interpretation of this identity is discussed by means of an introductory approach to Hilbert spaces.
In the last section, using properties of Bernoulli polynomials, their Fourier coefficients are computed.
Then Parseval's identity is applied and the values of the zeta function at even integers are computed (Theorem \ref{main}). 
The last section is concluded by some remarks on our proof and related works in the literature.
All sections are written to be accessible to undergraduate math students and we have tried to keep with the historical order.

{\bf Acknowledgment}: This paper is based on a poster presented by undergraduate student M. Hatzel at the Calgary Applied and Industrial Mathematical Sciences Conference, May 21-22, 2017, at which several people gave us feedback. 
Also, we would  like  to thank Professors  Masoud Khalkhali, Ram Murty and J\'{a}n Min\'{a}c
for  the valuable comments and encouragement that we received from them.

\section{Bernoulli Numbers and Polynomials}

The starting point of the Bernoulli polynomials goes back to the sum of powers of integer numbers. 
By the 6th Century B.C.E. the Pythagoreans knew how to find the sum of the first natural numbers, 
\begin{equation} \label{sumNaturalnumbers} 
 \sum _{n=1}^{m-1} n=\frac{1}{2}m(m-1)=\frac{m^2}{2} - \frac{m}{2}.
 \end{equation}
Archimedes (c. 287-212 B.C.E.) discovered how to calculate the sum of squares: \cite{Knoebel2007}  
\begin{equation}\label{sumSquares}
\sum _{n=1}^{m-1} n^2=\frac{1}{6}m(m-1)(2m-1)=\frac{1}{3}m^3-\frac{1}{2}m^2+\frac{1}{6}m.
\end{equation}
Finding sums of other powers
began to reach its climax in the 17th Century, with mathematicians such as Pierre de Fermat and Blaise Pascal coming closer to the objective. 
Then, Jacob Bernoulli discovered the right way of looking at the problem.

Let us first fix a notation\footnote{Bernoulli in \cite{Bernoullitrans2006} looks for sums of first $m$ numbers rather than $m-1$. 
This will introduce some slight differences between what we will find and what is available in Bernoulli's notes.}
\begin{equation}\label{notationforSp}
S_p(m):=\displaystyle{\sum_{n=1}^{m-1} n^p}.
\end{equation} 
In the study of binomial coefficients, Bernoulli found the following identity\footnote{A very good exercise for the interested reader would be to attempt a combinatorial proof for this identity.}
$$\sum_{n=0}^{m-1} \binom{n}{p}=\binom{m}{p+1}.$$
Note that  when expanded, the summand of the left side, 
$\frac{1}{p!}n(n-1)\cdots (n-p+1),$
 will give a polynomial  of degree $p$ in $n$. 
The sums of each term of this polynomial gives some $S_k(m)$.
Using this identity and by induction, Bernoulli found values of $S_p(m)$ for $p=1,\cdots,10$ \cite{Bernoullitrans2006}.
Furthermore, by an attentive examination of the these formulae, he discovered the pattern for coefficients of $S_p$.
This pattern is the main focus of the following theorem.
\begin{theorem}
Let $S_p$ be the quantity defined by \eqref{notationforSp}. 
Then $S_p(m)$'s are polynomials of order $p+1$ in $m$ and there is a sequence of rational numbers  $\{B_j\}_{j=0}^\infty$  
such that 
\begin{equation}\label{Spintermsofcoef}
 S_{p}(t)=\frac{1}{p+1}\sum_{j=0}^{p} B_{j} \binom{p+1}{j} t^{p-j+1}, \qquad  p>1.
\end{equation}
These numbers satisfy the following recursive relation
\begin{equation}\label{recursiveformulaforbj}
B_j=-\frac{1}{j+1}\sum_{l=0}^{j-1} B_{l} \binom{j+1}{l},\quad B_0=1.
\end{equation}  
\end{theorem}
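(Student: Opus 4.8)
The plan is to extract a single telescoping recursion for the power sums, deduce polynomiality and degree from it, and then verify that the Bernoulli ansatz in \eqref{Spintermsofcoef} solves that same recursion. To avoid a bookkeeping mismatch at $p=0$, I would write $\sigma_p(m):=\sum_{n=0}^{m-1}n^{p}$ (with $0^{0}:=1$), so that $\sigma_p=S_p$ for every $p\geq 1$ while $\sigma_0(m)=m$. Applying the binomial theorem to $(n+1)^{p+1}-n^{p+1}=\sum_{j=0}^{p}\binom{p+1}{j}n^{j}$ and summing the telescoping left-hand side over $n=0,\dots,m-1$ gives
\[
m^{p+1}=\sum_{j=0}^{p}\binom{p+1}{j}\,\sigma_j(m),\qquad p\geq 0 .
\]
Rewriting this as $(p+1)\sigma_p(m)=m^{p+1}-\sum_{j=0}^{p-1}\binom{p+1}{j}\sigma_j(m)$ and inducting on $p$ (base case $\sigma_0(m)=m$) shows that each $\sigma_p$, hence each $S_p$ with $p\geq 1$, agrees on the non-negative integers with a polynomial in $m$ of degree exactly $p+1$ and with rational coefficients; this is the first assertion, and henceforth the displayed relation may be read as an identity of polynomials.

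Next I would \emph{define} $B_0:=1$ and $B_j:=-\tfrac{1}{j+1}\sum_{l=0}^{j-1}B_l\binom{j+1}{l}$ for $j\geq 1$; these are patently rational and satisfy \eqref{recursiveformulaforbj} by construction, and the recursion is equivalent to $\sum_{l=0}^{N-1}\binom{N}{l}B_l=0$ for all $N\geq 2$ (the same sum being $B_0=1$ when $N=1$). Set $P_p(t):=\tfrac{1}{p+1}\sum_{j=0}^{p}B_j\binom{p+1}{j}t^{p+1-j}$. The crux is to show that the $P_p$ obey the very same recursion, namely $\sum_{j=0}^{p}\binom{p+1}{j}P_j(t)=t^{p+1}$. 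Substituting the definition of $P_j$, using $\tfrac{1}{j+1}\binom{p+1}{j}=\tfrac{1}{p+2}\binom{p+2}{j+1}$ to reindex the outer sum, and then the ``subset of a subset'' identity $\binom{p+2}{l}\binom{l}{k}=\binom{p+2}{k}\binom{p+2-k}{l-k}$, one finds that the coefficient of $t^{k}$ collapses to $\tfrac{1}{p+2}\binom{p+2}{k}\sum_{r=0}^{p+1-k}\binom{p+2-k}{r}B_r$, which by the reformulated recursion is $0$ for $1\leq k\leq p$ and $1$ for $k=p+1$ --- exactly $t^{p+1}$. Since $\sigma_p$ and $P_p$ now satisfy the same triangular family of identities, subtracting the two relations at index $p$ kills all terms with $j<p$ and leaves $(p+1)\bigl(\sigma_p-P_p\bigr)=0$; induction on $p$ gives $S_p=\sigma_p=P_p$ for $p\geq 1$, which is \eqref{Spintermsofcoef}.

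I expect the middle step --- checking that the ansatz $P_p$ reproduces the telescoping recursion --- to be the only genuine obstacle, and it is entirely a matter of the two binomial identities above; polynomiality, the degree, rationality, and the recursion for the $B_j$ then come for free. Two alternatives are worth recording: the first assertion by itself follows quickly from Bernoulli's identity $\sum_{n=0}^{m-1}\binom{n}{p}=\binom{m}{p+1}$ upon expanding $\binom{n}{p}$ as a polynomial in $n$; and \eqref{Spintermsofcoef} itself drops out of multiplying $\sum_{n=0}^{m-1}e^{nt}=\tfrac{e^{mt}-1}{e^{t}-1}$ by $\tfrac{t}{e^{t}-1}=\sum_{n\geq 0}B_n t^{n}/n!$ and comparing coefficients --- though this last route is further from the elementary, Bernoulli-style development pursued in this section.
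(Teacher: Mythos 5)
Your proof is correct, and while its computational heart coincides with the paper's, the route is organized differently enough to be worth comparing. The paper derives its recursion by shifting the summation index inside $S_{p+1}(m+1)$, which forces it to carry an awkward constant through the argument (the $-1$ in $S_{p+1}(m+1)-1$ and the substitution $S_0(t)=t-1$); your telescoping of $(n+1)^{p+1}-n^{p+1}$ with the convention $\sigma_0(m)=m$ yields the cleaner symmetric relation $m^{p+1}=\sum_{j=0}^{p}\binom{p+1}{j}\sigma_j(m)$ and dispenses with that bookkeeping. More substantively, the paper runs a single induction in which $B_p$ is \emph{constructed} at step $p$ by substituting the induction hypothesis into the recursion and reading off coefficients, using $S_k(1)=0$ to recognize the relation \eqref{recursiveformulaforbj} for the lower-index coefficients; you instead define all the $B_j$ up front, verify that the ansatz polynomials $P_p$ satisfy the same triangular system $\sum_{j=0}^{p}\binom{p+1}{j}P_j(t)=t^{p+1}$, and conclude $S_p=P_p$ by a uniqueness/induction step. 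The binomial manipulation you need --- $\tfrac{1}{j+1}\binom{p+1}{j}=\tfrac{1}{p+2}\binom{p+2}{j+1}$ followed by the subset-of-a-subset identity, reducing the coefficient of $t^k$ to $\sum_{r=0}^{N-1}\binom{N}{r}B_r$ with $N=p+2-k$ --- is exactly the reindexing the paper performs with $j=p-k+l$, so neither route is computationally cheaper. What yours buys is a cleaner logical separation (define the $B_j$, check the recursion, invoke uniqueness) and a tidier base case; what the paper's buys is a discovery-style induction that shows where the $B_j$ come from rather than positing them. All the individual steps you outline (the equivalence of \eqref{recursiveformulaforbj} with $\sum_{l=0}^{N-1}\binom{N}{l}B_l=0$ for $N\geq 2$, the degree count, the passage from an identity on integers to a polynomial identity) check out.
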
 
\begin{proof}
Let's first find a recursive formula for $S_p$. 
To do so we will apply a simple trick which is the change of index of summation in the definition of $S_p$ from $n$ to $n-1$:
\begin{equation*}
S_{p+1}(m+1)-1
=\sum_{n=2}^{m} n^{p+1}
=  \sum_{n=1}^{m-1} (n+1)^{p+1}
= \sum_{n=1}^{m-1} \sum_{k=0}^{p+1}{p+1 \choose k}n^k
= \sum_{k=0}^{p+1}{p+1\choose k} S_k(m). 
\end{equation*}
The above equality can be used to write 
$$(p+1) S_p(m)=S_{p+1}(m+1)-1-S_{p+1}(m)-\sum_{k=0}^{p-1}{p+1\choose k} S_k(m).$$
Using the simple fact that $m^{p+1}=S_{p+1}(m+1)-S_{p+1}(m)$, we find the recursive formula
\begin{equation}\label{recursiveformulaforSp}
S_p(m)=\frac{1}{p+1}\left(m^{p+1}-1-\sum_{k=0}^{p-1}{p+1\choose k} S_k(m)\right),
\end{equation}
where the initial value is given by $S_0(m)=m-1$.  
A direct result of this recursive formula is that $S_p$ is a polynomial of order $p+1$ for every $p$, with rational coefficients.
From now on we will change the integer variable $m$ to the general real variable $t$.

To prove \eqref{Spintermsofcoef} we will use induction on $p$ and construct $B_j$ as induction proceeds. 
As for the base case, we set $B_0=1$ and $B_1=-\frac{1}{2}$, then it is easily seen that $S_1(t)$ is of the form given by \eqref{Spintermsofcoef}.

Then, as the induction hypothesis, assume that there are rational numbers $\{B_j\}_{j=0}^{p-1}$ such that for all $k<p$ we have 
\begin{equation}\label{inductionhypothesis}
S_{k}(t) =\frac{1}{k+1}\sum_{l=0}^{k} B_{l} \binom{k+1}{l} t^{k-l+1}
 , \qquad  p>1
\end{equation}
One can readily see that $S_k(1)=0$ and using the induction hypothesis \eqref{inductionhypothesis}, the constants $B_k$, for $k<p$, satisfy the equality,
\begin{equation}\label{recursiveofBkininduction}
B_k=-\frac{1}{k+1}\sum_{l=0}^{k-1} B_{l} \binom{k+1}{l} 
 ,  \quad 1\leq k<p.
\end{equation}

We will proceed by  replacing the formula given for $S_k$ in \eqref{inductionhypothesis} into the recursive formula \eqref{recursiveformulaforSp}.
Notice that while replacing we should use $S_0(t)=t-1=B_0(t)-1$.
$$S_p(t)
=\frac{1}{p+1}\left(t^{p+1}-\sum_{k=0}^{p-1}\binom{p+1}{k} \frac{1}{k+1}\sum_{l=0}^{k} B_{l} \binom{k+1}{l} t^{k-l+1}\right).$$
By setting a new variable $j=p-k+l$ we can find the coefficient of  $t^{p-j+1}$  for any $1\leq j \leq p$ given by
$$\frac{-1}{p+1}\sum_{l=0}^{j-1}\binom{p+1}{p-j+l}\binom{p-j+l+1}{l} \frac{B_l}{p-j+l+1} =\frac{\binom{p+1}{j}}{p+1}\left(\frac{-1}{j+1}\sum_{l=0}^{j-1} \binom{j+1}{l} B_l\right).$$
Now we can use \eqref{recursiveofBkininduction}  for any 
$j<p$; however, for $j=p$ we let $B_p$ be defined by
$$B_p=-\frac{1}{p+1}\sum_{l=0}^{p-1} \binom{p+1}{l} B_l.$$
Therefore we have 
$$S_p(t)=\frac{1}{p+1}\left(t^{p+1}+\sum_{j=1}^p \binom{p+1}{j} B_j t^{p-j+1}\right).$$   
Noting that $B_0=1$, the induction step is complete. 
Observe that while proving the induction step, we constructed the sequence $B_p$ inductively such that the relation \eqref{recursiveformulaforbj} is satisfied.
\end{proof}

\begin{definition}
The constant $B_j$, obtained in the above theorem, is called the {\it $j^{th}$  Bernoulli number}.\footnote{Bernoulli originally denoted $B_2$ by $A$, and $B_3$ by $B$, so on and so forth.}
\end{definition} 
In the early 1730s, while 
proving his summation formula, 
Euler also discovered these numbers \cite{Euler1925}.
 Among the many of his discoveries was a recursive formula for finding the Bernoulli numbers, and a generating function.
 Here we shall use the recursive formula \eqref{recursiveformulaforbj} to show how the generating function can be computed. 
 
 Let $G(x)$ be the generating function of the Bernoulli numbers, i.e. formally $G(x)=\sum_{j=0}^\infty \frac{B_j}{j!}x^j$.
 Then we have
 \begin{eqnarray*}
 G(x)&=& \sum_{j=0}^\infty \frac{B_j}{j!}x^j\\
 &=& 1-\sum_{j=1}^{\infty} \frac1{(j+1)!}\sum_{l=0}^{j-1} B_{l} \binom{j+1}{l}x^j\\
 &=& 1-\sum_{j=1}^{\infty}\sum_{l=0}^{j-1} \frac{B_{l}}{(j+1-l)! l!} x^j\\
 &=& 1-\sum_{l=0}^{\infty}\frac{B_l}{l!}\sum_{j=l+1}^{\infty} \frac{1}{(j+1-l)!} x^{j}\\
 &=& 1-\sum_{l=0}^{\infty}\frac{B_l}{l!}\sum_{j=2}^{\infty} \frac{1}{j!} x^{j+l-1}\\
  &=& 1-\frac{1}{x}\left(\sum_{l=0}^{\infty}\frac{B_l}{l!}x^l\right)\left(\sum_{j=2}^{\infty} \frac{1}{j!} x^j\right)\\
  &=& 1-\frac{1}{x}G(x)\left(e^x-x-1\right).
\end{eqnarray*} 
  Therefore $G(x)= 1-\frac{1}{x}\left(e^x-x-1\right)G(x)$, which implies that
\begin{equation}\label{generatingfunctionnumbers}
G(x)=\frac{x}{e^x-1}. 
\end{equation}
From \eqref{generatingfunctionnumbers} one can find
$$B_0=1,\,\, B_1=-\frac12, \,\, B_2=\frac16, \,\, B_3=0, \,\, B_4=-\frac1{30}, \,\, B_5=0, \,\, B_6=\frac1{42}, \,\, B_7=0, \cdots. $$
Note  that $G(x)-(-\frac12 x)=\frac{x(e^x+1)}{2(e^x-1)}$ is an even function. 
This implies that all the odd Bernoulli numbers, $B_{2k+1}$ for $k\geq 1$, are zero ($B_1$ is the exception).
  
\begin{definition}
The derivative of the polynomial $S_p(t)$ is called the {\it $p^{th}$ Bernoulli polynomial} and we denote it by $B_p(t)$.
\end{definition}
Bernoulli  polynomials are monic polynomials and they can be written in terms of Bernoulli numbers  as follows (derive \eqref{Spintermsofcoef}):
\begin{equation}\label{Bernoullipolyintermsofnumbers}
B_p(t):=\sum_{k=0}^{p} B_k\binom{p}{k}t^{p-k}, \quad k\geq 0. 
\end{equation}
Using \eqref{Bernoullipolyintermsofnumbers} and \eqref{generatingfunctionnumbers}, one can easily find the generating function of the Bernoulli polynomials 
\begin{equation}\label{GeneratingfunctionofBpoly}
G(x,t)=\sum_{p=0}^\infty B_p(t)\frac{x^p}{p!}
= \frac{xe^{tx}}{e^{x}-1}.
\end{equation} 
Examples of the first few Bernoulli polynomials are
\begin{equation*}\
B_{0}(t)=1, \quad  B_{1}(t)=t-\frac{1}{2},\quad  B_{2}(t)=t^2-t+\frac{1}{6},\quad   B_{3}(t)=t^3-\frac{3}{2}t^2+\frac{1}{2}t.
\end{equation*}
By differentiating \eqref{Bernoullipolyintermsofnumbers} we have 
\begin{equation}\label{Berandderivatives}
B'_{p}(t)= pB_{p-1}(t),\quad p \geq 1.
\end{equation}
As a result, we have $S'_p(t)=\frac{B_{p+1}'(t)}{p+1}$, which can be used to write the sums of powers in terms of Bernoulli polynomials
\begin{equation}\label{sumsandBernoulli}
S_p(m)=S_p(m)-S_p(0)=\int_0^m S'_p(t)dt=\int_0^m\frac{B_{p+1}'(t)}{p+1}=\frac{1}{p+1}(B_{p+1}(m)-B_{p+1}(0)).
\end{equation}
Additionally, \eqref{Bernoullipolyintermsofnumbers} readily shows $B_p(0)=B_p$.
 Moreover, by \eqref{sumsandBernoulli} we have 
$$0=S_p(1)=\frac{1}{p+1}(B_{p+1}(m)-B_{p+1}(0)).$$
 Therefore,  
\begin{equation}\label{Bernumbers}
B_p(1)= B_p(0)=B_p, \quad p \geq 2.
\end{equation}
The reader can refer to  \cite{Apostol2008} for more details and more identities involving Bernoulli numbers and polynomials.

\section{Fourier Series and Parseval's Identity}

In this section, we will introduce the tools of Fourier coefficients and Parseval's identity which play a central role in our strategy to find values of the zeta function at even integers. 
Fourier analysis, at its original form, is concerned with the decomposition of functions as infinite sums of trigonometric functions.  
This branch of mathematical inquiry arose following Joseph Fourier (1768-1830) who, motivated by a need for formulae that could model the conduction of heat, used this technique to find real-value solutions of functions; they  also are used to measure frequencies of vibrations. 
Parseval's identity is named for Marc-Antoine Parseval (1755-1836), and deals with summability of the Fourier coefficients. 
From a different perspective, both of these tools are among the first versions of more abstract theory, that is 
the theory of Hilbert spaces.
We have chosen the latter to present the topic here; however, to avoid the technical complications we shall not include proofs and instead show similar results in the finite case to help the reader develop the right intuition for the topic.

\begin{definition}\label{fcoefficients}
Let $f$ be an integrable function on $[0,1]$ then the $n^{th}$ Fourier coefficient $c_n(f)$ of $f$ is defined by
\begin{equation*}
c_n(f):=\langle f,e_n\rangle=\int_0^1 f(t) e^{-2\pi int} dt,\quad n\in\mathbb{Z}.
\end{equation*} 
\end{definition}

To understand a 
 geometric meaning of the Fourier coefficients we need to see them in the general setting of Hilbert spaces,  
 which are complex vector spaces equipped with a Hermitian inner product with complete topology.  Further introduction to Fourier analysis in Hilbert space can be found in \cite{Saxe2002}. 
Let's first see the finite dimensional versions of such spaces $V=\mathbb{C}^k$ with the inner product given by 
$$\langle (v_1,\cdots,v_k),(w_1,\cdots,w_k)\rangle= \sum_{j=1}^k v_j\overline{w_j}.$$
Here $\overline{w_j}$ denotes the complex conjugate of the complex number $w_j$.
On such vector space we can define the length of vectors by 
\begin{equation}\label{normfrominnerproduct}
\|v\|:=\sqrt{\langle v,v\rangle},\qquad v\in V.
\end{equation}

Let $e_n$, for all $1\leq n\leq k$, denote the vector with 1 in the $n^{th}$ component and zero in all other components.
The set of vectors $\{e_n\}_{n=1}^m$ form a so-called orthonormal basis for $V$; that is, 
\begin{enumerate}
\item they are orthonormal: $\langle e_n,e_m\rangle=\delta_{nm}$;
\item every vector in $V$ can be written as a linear combination of $e_n$'s. 
\end{enumerate}
The important property of an orthonormal basis, in particular $\{e_n\}$, is that the coefficients of $e_n$ in the linear combination which gives the vector $v\in V$ is given by the inner product. 
In other words, if $v=\sum_{m=1}^k c_m e_m$ then 
$$\langle v,e_n\rangle= \langle \sum_{m=1}^k c_m e_m,e_n\rangle=\sum_{m=1}^k c_m \langle  e_m,e_n\rangle  =\sum_{m=1}^k c_m \delta_{nm}=c_n.$$ 
Moreover, the length of a vector can also be computed using its inner product by $e_n$ as follows
\begin{eqnarray}\label{finitepytho}\nonumber
\|v\|^2&=&\|\sum_{m=1}^k \langle v,e_m\rangle e_m\|^2\\ \nonumber
&=& \langle\sum_{n=1}^k \langle v,e_n\rangle e_n,\sum_{m=1}^k \langle v,e_m\rangle e_m\rangle\\
&=&\sum_{n=1}^k \sum_{m=1}^k  \langle v,e_n\rangle \overline{\langle v,e_m\rangle} \langle e_n, e_m\rangle\\\nonumber
&=& \sum_{n=1}^k   \langle v,e_n\rangle \overline{\langle v,e_n\rangle}\\\nonumber
&=& \sum_{n=1}^k   |\langle v,e_n\rangle|^2.
\end{eqnarray}
This equality is nothing but the Pythagorean theorem in higher dimensional Hermitian spaces.

To come back to our case, where the Fourier coefficients can be obtained, we will need infinite dimensional spaces with inner product, called Hilbert spaces. 
Consider the linear space 
$$H=\left\{\left.f:[0,1]\to\mathbb{C}\right| \, f \text{ is measurable and } \int_0^1|f(t)|^2dt<\infty\right\}.$$
Here the functions are going to play the role of vectors and the inner product is given by
$$\langle f,g\rangle=\int_0^1 f(t)\overline{g(t)}dt. $$
The norm of a function (called $L^2$-norm) is defined as \eqref{normfrominnerproduct} using this inner product.
Unlike $V$, $H$ is infinite dimensional, meaning that we need infinitely many elements $\{e_n\}_{j=1}^\infty$ to form a basis.
Also, we may encounter infinite sums while trying to write linear combinations of elements, 
so a notion of convergence of linear combinations will be needed.
In particular, the second criteria in the definition of orthonormal basis should be replaced by
\begin{enumerate}
\item[2'.] Every vector in $H$ is the limit of (possibly infinite) linear combinations of $e_j$'s. 
\end{enumerate}
As an example, the functions 
 \begin{equation*}
 e_n(t):=e^{2\pi i n t},\quad  n\in\mathbb{Z},
\end{equation*} 
form an orthonormal basis for $H$
(see examples in \cite[p.187]{Rudin1976}).

With all these in hand it is obvious that 
$$c_n(f)=\langle f,e_n\rangle.$$
Moreover, an infinite dimensional version of the computation \eqref{finitepytho}  can be performed and the result is known as Parseval's identity (for a proof see \cite[p.191]{Rudin1976}).   

\begin{theorem}[Parseval's identity]\label{TheorPars}
Suppose $f$ is a Riemann-integrable function. Then 
\begin{equation}\label{eqPars}
\int_0^1 |f(x)|^2dx=\sum_{-\infty}^\infty |c_n(f)|^2.\vspace*{-0.5cm}
\end{equation}\hfill\qedsymbol
\end{theorem}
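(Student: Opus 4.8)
The plan is to deduce Parseval's identity from two ingredients: the finite Pythagorean-type bookkeeping already foreshadowed by \eqref{finitepytho}, and the completeness of the exponential system $\{e_n\}$ in $H$, i.e. the density of the trigonometric polynomials in the $L^2$-norm. First I would set up the finite case. For $N\in\mathbb{N}$ let $s_N=\sum_{|n|\le N} c_n(f)\,e_n$ be the $N$-th partial sum of the Fourier series. Since $\langle e_n,e_m\rangle=\delta_{nm}$, the computation mimicking \eqref{finitepytho} gives $\langle f-s_N,e_n\rangle=0$ for all $|n|\le N$, hence $\langle f-s_N,s_N\rangle=0$, and therefore the orthogonal decomposition $\|f\|^2=\|f-s_N\|^2+\|s_N\|^2=\|f-s_N\|^2+\sum_{|n|\le N}|c_n(f)|^2$. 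In particular $\sum_{|n|\le N}|c_n(f)|^2\le\|f\|^2$ (Bessel's inequality), so the series $\sum_n|c_n(f)|^2$ converges and the theorem reduces to proving $\|f-s_N\|\to 0$ as $N\to\infty$. The same identity shows that $s_N$ is the closest trigonometric polynomial of degree $\le N$ to $f$ in $L^2$-norm, and that $\|f-s_N\|$ is non-increasing in $N$.

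Next I would establish $\|f-s_N\|\to 0$ by an approximation argument. Fix $\varepsilon>0$. Since $f$ is Riemann-integrable it is bounded, and the standard lower/upper sum estimates let me find a continuous, $1$-periodic function $g$ with $\|f-g\|_{L^2}<\varepsilon$: take $g$ equal to $f$ away from a small neighbourhood of a sufficiently fine partition and piecewise-linear across those neighbourhoods, so the $L^2$-defect is controlled by the oscillation sums of $f$. Then I would invoke the uniform approximation of continuous periodic functions by trigonometric polynomials — Fej\'er's theorem, obtained from the non-negativity and approximate-identity properties of the Fej\'er kernel, or equivalently the trigonometric Stone--Weierstrass theorem — to get a trigonometric polynomial $P$ of some degree $N_0$ with $\|g-P\|_\infty<\varepsilon$, hence $\|g-P\|_{L^2}<\varepsilon$. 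Combining, $\|f-P\|_{L^2}<2\varepsilon$, and the best-approximation property yields $\|f-s_N\|\le\|f-P\|_{L^2}<2\varepsilon$ for every $N\ge N_0$. As $\varepsilon$ was arbitrary, $\|f-s_N\|\to 0$, and letting $N\to\infty$ in $\|f\|^2=\|f-s_N\|^2+\sum_{|n|\le N}|c_n(f)|^2$ gives \eqref{eqPars}.

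The main obstacle is the completeness input, namely proving the density of trigonometric polynomials (Fej\'er's or Weierstrass's theorem); everything else is elementary linear algebra in $H$ together with a routine smoothing of a Riemann-integrable function. Since the surrounding section deliberately omits the Hilbert-space proofs and already cites the completeness of $\{e_n\}$, the cleanest way to keep this short is to cite Fej\'er's theorem for continuous periodic functions, so that the only genuinely new step beyond the cited material is the two-line reduction from a Riemann-integrable $f$ to a continuous $g$ in the $L^2$-norm.
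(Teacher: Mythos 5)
The paper does not actually prove this theorem: it is stated with a pointer to Rudin and the proof is deliberately omitted, so there is nothing internal to compare against. Your argument is correct in structure and is essentially the proof in the cited reference: orthonormality of $\{e_n\}$ gives the Pythagorean decomposition $\|f\|^2=\|f-s_N\|^2+\sum_{|n|\le N}|c_n(f)|^2$, hence Bessel's inequality and the best-approximation property of $s_N$, and everything reduces to $\|f-s_N\|\to 0$, which follows from Fej\'er (or the trigonometric Stone--Weierstrass theorem) once the Riemann-integrable $f$ is replaced by a nearby continuous periodic $g$. The one detail to repair is your construction of $g$: setting $g=f$ away from small neighbourhoods of the partition points does not produce a continuous function unless $f$ happens to be continuous there, and a Riemann-integrable function can be discontinuous on a dense (measure-zero) set, so this step fails as literally described. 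The standard fix is to take $g$ to be the piecewise-linear interpolant of the values $f(x_i)$ on the whole of each subinterval $[x_{i-1},x_i]$; then $f$ and $g$ both take values in $[m_i,M_i]$ on that subinterval, so $\int_0^1|f-g|^2\,dt\le 2\sup|f|\cdot\bigl(U(P,f)-L(P,f)\bigr)$, which a sufficiently fine partition makes small. With that adjustment your proof is complete and matches the one the paper cites.
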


Similar to \eqref{finitepytho}, Parseval's identity can be considered as the generalization of the Pythagorean theorem in infinite dimensional space $H$, where the absolute value of the Fourier coefficients $|c_n(f)|$ play the role of length of the orthogonal sides of (an infinite dimensional) right triangle, and the sum of their squares is equal to the square of length of the function (hypotenuse) given by the integral.

\begin{remark}
At the beginning of the 20th Century David Hilbert (1862-1943) introduced abstract inner product spaces to embrace existing theories of function spaces, such as Fourier analysis, and to develop new tools to study such notions as integral operators. 
In particular, these abstract spaces, known as Hilbert spaces, allow for the manipulation of functions which otherwise would not meet the conditions of convergence and continuity required to perform such manipulations.
\end{remark}

\section{The Main Theorem}

In this section we show how the values of $\zeta(2k)$ can be obtained by applying Parseval's identity with the the Fourier coefficients of Bernoulli polynomials.
To that end, we first need to find the Fourier coefficients of Bernoulli polynomials and their $L^2$ norm. 
In both of the following Lemmas, the main idea lies in the following simple computation for any differential function $f$ on $[0,1]$, which one can easily obtain by integration by parts and \eqref{Berandderivatives} and \eqref{Bernumbers}.
\begin{equation*}
\int_0^1B_k(t)f'(t)dt= (f(1)-f(0)) B_k-k\int_0^1 B_{k-1}(t)f(t)dt, \quad k\geq 2,
\end{equation*} 
while for $k=1$ we have $\int_0^1 B_1(t)f'(t)dt= (f(1)+f(0)) B_1-\int_0^1 f(t)dt.$

The following Lemma gives the Fourier coefficients of Bernoulli polynomials. 
\begin{lemma}\label{lemcnbk}
For all integers $k \geq1$,
$$c_n(B_k)=\begin{cases} \frac{-k!}{(2\pi i n)^k} & n\neq 0\\ 0 & n=0\end{cases}.$$
\end{lemma}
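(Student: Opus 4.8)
The plan is to compute $c_n(B_k) = \int_0^1 B_k(t) e^{-2\pi i n t}\, dt$ by induction on $k$, using the integration-by-parts identity highlighted just before the lemma statement. First I would handle the case $n = 0$ separately: since $B_k(t) = \frac{1}{k}S_k'(t)$ is a derivative (for $k \geq 1$) and $S_k(0) = S_k(1) = 0$ by \eqref{notationforSp} (noting $S_k(1)$ is an empty sum), we get $\int_0^1 B_k(t)\, dt = \frac{1}{k}(S_k(1) - S_k(0)) = 0$ for all $k \geq 1$; alternatively this is immediate from \eqref{Berandderivatives} and \eqref{Bernumbers}. So the $n=0$ entry is handled at once, and for the rest I assume $n \neq 0$.

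For the base case $k = 1$, apply the displayed identity for $k=1$ with $f(t) = \frac{e^{-2\pi i n t}}{-2\pi i n}$, so that $f'(t) = e^{-2\pi i n t}$. Then $f(1) + f(0) = \frac{1 + 1}{-2\pi i n} = \frac{-1}{\pi i n}$ (using $e^{-2\pi i n} = 1$), and $\int_0^1 f(t)\, dt = \frac{1}{-2\pi i n}\cdot\frac{e^{-2\pi i n} - 1}{-2\pi i n} = 0$. With $B_1 = -\tfrac12$ this yields $c_n(B_1) = \left(\frac{-1}{\pi i n}\right)\left(-\tfrac12\right) - 0 = \frac{1}{2\pi i n} = \frac{-1!}{(2\pi i n)^1}$, matching the claim.

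For the inductive step, suppose $k \geq 2$ and the formula holds for $k-1$. Apply the $k \geq 2$ version of the identity with the same choice $f(t) = \frac{e^{-2\pi i n t}}{-2\pi i n}$, $f'(t) = e^{-2\pi i n t}$. Since $e^{-2\pi i n} = 1$, the boundary term $f(1) - f(0)$ vanishes, leaving
\begin{equation*}
c_n(B_k) = \int_0^1 B_k(t) f'(t)\, dt = -k \int_0^1 B_{k-1}(t) f(t)\, dt = \frac{-k}{-2\pi i n}\int_0^1 B_{k-1}(t) e^{-2\pi i n t}\, dt = \frac{k}{2\pi i n}\, c_n(B_{k-1}).
\end{equation*}
Plugging in the induction hypothesis $c_n(B_{k-1}) = \frac{-(k-1)!}{(2\pi i n)^{k-1}}$ gives $c_n(B_k) = \frac{k}{2\pi i n}\cdot\frac{-(k-1)!}{(2\pi i n)^{k-1}} = \frac{-k!}{(2\pi i n)^k}$, completing the induction.

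I do not anticipate a serious obstacle here; the only points requiring care are bookkeeping ones: correctly invoking the two different forms of the integration-by-parts identity for $k=1$ versus $k \geq 2$ (the sign on the boundary term differs, which is exactly why the base case must be done by hand), and consistently using $e^{-2\pi i n} = 1$ for integer $n$ to kill the boundary contributions. The vanishing of $c_0(B_k)$ also deserves an explicit one-line justification rather than being swept into the induction, since the recursion $c_n(B_k) = \frac{k}{2\pi i n} c_n(B_{k-1})$ is only valid for $n \neq 0$.
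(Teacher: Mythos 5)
Your overall strategy --- integration by parts to obtain the recursion $c_n(B_k)=\frac{k}{2\pi i n}c_n(B_{k-1})$ for $k\ge 2$, a hand-computed base case at $k=1$, and a one-line argument that $c_0(B_k)=0$ --- is exactly the paper's proof, and your inductive step and $n=0$ case are fine (modulo the harmless slip that the paper defines $B_k=S_k'$, not $\frac1k S_k'$; the integral is $S_k(1)-S_k(0)=0$ either way). However, the base case as written contains a genuine sign error: you compute $c_n(B_1)=\frac{1}{2\pi i n}$ and then assert that this equals $\frac{-1!}{(2\pi i n)^1}=\frac{-1}{2\pi i n}$. These two quantities are negatives of one another, so the chain of equalities is false. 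The correct value is $c_n(B_1)=\frac{-1}{2\pi i n}$, as a direct computation confirms:
\begin{equation*}
\int_0^1\Bigl(t-\tfrac12\Bigr)e^{-2\pi i n t}\,dt=\left[t\,\frac{e^{-2\pi i n t}}{-2\pi i n}\right]_0^1-\int_0^1\frac{e^{-2\pi i n t}}{-2\pi i n}\,dt=\frac{-1}{2\pi i n},\qquad n\neq 0.
\end{equation*}

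The root cause is that the displayed $k=1$ identity you invoked is itself mis-stated: integrating by parts gives $\int_0^1 B_1(t)f'(t)\,dt=B_1(1)f(1)-B_1(0)f(0)-\int_0^1 f(t)\,dt=\tfrac12\bigl(f(1)+f(0)\bigr)-\int_0^1 f(t)\,dt$, since $B_1(1)=\tfrac12$ and $B_1(0)=-\tfrac12$. The coefficient of $f(1)+f(0)$ is therefore $+\tfrac12=-B_1$, not $B_1$. You applied the identity literally, obtained the wrong sign, and then silently flipped it to "match the claim" --- which is precisely the kind of step that should have triggered a recheck. With the corrected identity your own computation gives $\bigl(\frac{-1}{\pi i n}\bigr)\cdot\tfrac12=\frac{-1}{2\pi i n}$ and the base case closes; everything else in your argument stands unchanged.
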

\begin{proof}
By integration by parts, for $n\neq 0$
{\small 
\begin{align*}
c_n(B_k)
&=\left. B_{k}(t)\frac{e^{-2\pi i n t}}{-2\pi i n}\right]_0^{1} -\int_{0}^{1} B'_k(t)\frac{e^{-2\pi i n t}}{-2\pi i n} \, dt.
 \end{align*}}
Because of \eqref{Bernumbers}, the first term vanishes for $k\geq 2$, and applying \eqref{Berandderivatives} the equality continues
\begin{equation}\label{recursive3}
c_n(B_k)=\frac{k}{2\pi i n}\int_{0}^{1} B_{k-1}(t)e^{-2\pi i n t}\, dt=\frac{k}{2\pi i n} c_n(B_{k-1}).
\end{equation}
And for $k=1$,  
\begin{equation}\label{recursive4}
c_n(B_1)=\frac{-1}{2\pi i n}.
\end{equation}
Combining \eqref{recursive3} and \eqref{recursive4} we achieve our proof, 
$$c_n(B_k)= \frac{-k!}{(2\pi i n)^k}.$$
Note that by the definition of the Bernoulli polynomials in terms of $S_p$ we have $\int_0^1B_k(t)dt=S_p(1)-S_p(0)=0$, so that $c_0(B_k)=0$, $k\geq 1$.
\end{proof}
\begin{remark}
Another interesting approach one can take to find the Fourier coefficients of Bernoulli polynomials  is to use their generating function \eqref{GeneratingfunctionofBpoly}. 
Being careful with the convergence conditions, one needs to see that 
$$\int_0^1 G(x,t) e^{-2\pi i nt}dt=\sum_{p=0}^\infty c_n(B_p) \frac{x^p}{p!}.$$ 
\end{remark}
To obtain $L^2$ norm of Bernoulli polynomials, we  first shall find a recursive expression for the integration of products of two Bernoulli polynomials.
\begin{lemma}\label{lembksq} 
For  all integers $1\leq k\leq l$, 
$$\displaystyle{\int_{0}^{1}} B_{k}(t)B_{l}(t) dt=\frac{(-1)^{k-1}l! k!B_{l+k}}{(l+k)!}.$$
\end{lemma}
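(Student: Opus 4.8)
The plan is to set up a recursion that lowers the first index $k$ by one (while raising the second by one), and then induct on $k$. Using $B_l(t)=\tfrac{1}{l+1}B_{l+1}'(t)$ from \eqref{Berandderivatives} together with the integration-by-parts identity displayed at the start of this section applied to $f=B_{l+1}$, the boundary contribution is a multiple of $B_{l+1}(1)-B_{l+1}(0)$, which vanishes by \eqref{Bernumbers} since $l+1\ge 2$. This yields, for all $k\ge 2$ and $l\ge 1$,
\[
\int_0^1 B_k(t)B_l(t)\,dt=\frac{-k}{\,l+1\,}\int_0^1 B_{k-1}(t)B_{l+1}(t)\,dt .
\]
The pair $(k-1,l+1)$ again satisfies $1\le k-1\le l+1$ whenever $2\le k\le l$, so this relation can be fed into an induction.

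First I would dispatch the base case $k=1$, where the boundary term does \emph{not} vanish and must be evaluated honestly. Integrating by parts and using $B_1(1)=\tfrac12=-B_1(0)$, $B_1'\equiv 1$, $B_{l+1}(1)=B_{l+1}(0)=B_{l+1}$ (by \eqref{Bernumbers}, valid since $l\ge 1$), and $\int_0^1 B_{l+1}(t)\,dt=c_0(B_{l+1})=0$ (the note in the proof of Lemma \ref{lemcnbk}),
\[
(l+1)\int_0^1 B_1(t)B_l(t)\,dt=\bigl[B_1(t)B_{l+1}(t)\bigr]_0^1-\int_0^1 B_{l+1}(t)\,dt=B_{l+1},
\]
so $\int_0^1 B_1(t)B_l(t)\,dt=\tfrac{B_{l+1}}{l+1}$, which is exactly the asserted formula at $k=1$.

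For the inductive step I would assume the formula for $k-1$ and every $l'\ge k-1$, then apply the recursion followed by the inductive hypothesis with $l'=l+1$ (legitimate since $l+1\ge k-1$):
\[
\int_0^1 B_k(t)B_l(t)\,dt=\frac{-k}{\,l+1\,}\cdot\frac{(-1)^{k-2}(l+1)!\,(k-1)!\,B_{l+k}}{(l+k)!}=\frac{(-1)^{k-1}k!\,l!\,B_{l+k}}{(l+k)!},
\]
the last equality being the bookkeeping $k(k-1)!=k!$, $(l+1)!/(l+1)=l!$, and $-(-1)^{k-2}=(-1)^{k-1}$. This closes the induction.

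I do not anticipate a genuine obstacle: once the recursion is in place the proof is a short computation. The two points needing care are (i) checking that the constraint $1\le k\le l$ is preserved under $(k,l)\mapsto(k-1,l+1)$ so that \eqref{Bernumbers} and the quoted integration-by-parts identity legitimately apply at every stage, and (ii) the sign in the $k=1$ base case, where the surviving boundary term must be read off from $B_1(1)=-B_1(0)=\tfrac12$ rather than from the $k\ge 2$ rule. A slicker-sounding but computationally heavier alternative would be to integrate the product of generating functions \eqref{GeneratingfunctionofBpoly}, namely $\int_0^1 G(x,t)G(y,t)\,dt=\dfrac{xy\,(e^{x+y}-1)}{(x+y)(e^x-1)(e^y-1)}$, and extract the coefficient of $\tfrac{x^k}{k!}\tfrac{y^l}{l!}$; I would still prefer the recursion, since here the coefficient extraction is no easier than the induction.
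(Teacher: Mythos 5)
Your proposal is correct and follows essentially the same route as the paper: the identical integration-by-parts recursion $\int_0^1 B_kB_l = \frac{-k}{l+1}\int_0^1 B_{k-1}B_{l+1}$ for $k\ge 2$, the same explicit evaluation of the $k=1$ base case via $B_1(1)=-B_1(0)=\tfrac12$ and $\int_0^1 B_{l+1}=0$, and induction on $k$. Your explicit check that the constraint $1\le k\le l$ is preserved under $(k,l)\mapsto(k-1,l+1)$ is a point the paper leaves implicit.
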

\begin{proof}
Let's denote the left side by $A_{k,l}$.
Using \eqref{Berandderivatives} and integrating by parts we have,
\begin{equation*}
A_{k,l}
=\int_{0}^{1}\! B_{k}(t)\frac{B'_{l+1}(t)}{l+1} \, dt=\left. B_k(t)\frac{B_{l+1}(t)}{l+1}\right]_0^1 -\int_0^1 B'_k(t) \frac{B_{l+1}(t)}{l+1} dt.
 \end{equation*}
Because of \eqref{Bernumbers},  the first term vanishes for $k\geq 2$ and applying \eqref{Berandderivatives} the equality continues  
\begin{equation}\label{recursive1}
A_{k,l}=\frac{-k}{(l+1)}\int_0^1  B_{k-1}(t)B_{l+1}(t)dt=\frac{-k}{(l+1)}A_{k-1,l+1}.
\end{equation}
However, for $k=1$, we have  
\begin{equation}\label{recursive2}
A_{1,l}=\frac{B_{l+1}(0)}{l+1}\left( B_1(1)-B_1(0)\right) -\int_0^1 \frac{B_{l+1}(t)}{l+1} dt=\frac{B_{l+1}}{l+1}.
\end{equation}
Combining \eqref{recursive1} and \eqref{recursive2},
 we obtain the desired result: 
\begin{equation*}
A_{k,l}=\frac{(-1)^{k-1}l! k!B_{l+k}}{(l+k)!},\quad 0<k\leq l.
\end{equation*}
\end{proof}

Finally, we can have the main theorem.
\begin{theorem}\label{main}
For any positive integer $k \geq 1$,  we have 
\begin{equation}\label{valuesofzetaat2k}
\zeta(2k)=
\frac{(-1)^{k-1}\pi^{2k} 2^{2k-1} }{(2k)!}B_{2k}.
\end{equation}
\end{theorem}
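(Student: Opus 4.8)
The plan is to apply Parseval's identity (Theorem \ref{TheorPars}) to the function $f = B_k$ on $[0,1]$ and then match the two sides against Lemmas \ref{lemcnbk} and \ref{lembksq}. Since the Bernoulli polynomials have rational coefficients, $B_k$ is real-valued, so $|B_k(t)|^2 = B_k(t)^2$ and the left-hand side of \eqref{eqPars} is exactly the quantity $A_{k,k}$ computed in Lemma \ref{lembksq} with $l = k$, namely
\begin{equation*}
\int_0^1 B_k(t)^2\,dt = \frac{(-1)^{k-1}(k!)^2 B_{2k}}{(2k)!}.
\end{equation*}

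For the right-hand side, I would substitute the Fourier coefficients from Lemma \ref{lemcnbk}. For $n \neq 0$ we have $|c_n(B_k)|^2 = \frac{(k!)^2}{(2\pi)^{2k} n^{2k}}$, while $c_0(B_k) = 0$, so that
\begin{equation*}
\sum_{n=-\infty}^\infty |c_n(B_k)|^2 = \frac{(k!)^2}{(2\pi)^{2k}} \sum_{n \neq 0} \frac{1}{n^{2k}} = \frac{2(k!)^2}{(2\pi)^{2k}} \sum_{n=1}^\infty \frac{1}{n^{2k}} = \frac{2(k!)^2}{(2\pi)^{2k}} \zeta(2k),
\end{equation*}
where the factor $2$ comes from pairing $n$ with $-n$ (legitimate here since all terms are nonnegative). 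Equating the two expressions and solving for $\zeta(2k)$ gives
\begin{equation*}
\zeta(2k) = \frac{(-1)^{k-1}(2\pi)^{2k} B_{2k}}{2\,(2k)!} = \frac{(-1)^{k-1}\pi^{2k}2^{2k-1}}{(2k)!} B_{2k},
\end{equation*}
which is \eqref{valuesofzetaat2k}.

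There is no real obstacle: the entire difficulty of the argument has already been absorbed into the two preceding lemmas (the recursive integration-by-parts computations of $c_n(B_k)$ and of $A_{k,l}$) and into the cited proof of Parseval's identity. The only points requiring a word of care are the observation that $B_k$ is real so that Lemma \ref{lembksq} applies directly, and the rearrangement of the doubly-infinite sum, which is justified by nonnegativity of the summands.
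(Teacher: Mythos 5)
Your proposal is correct and follows exactly the same route as the paper: apply Parseval's identity to $B_k$, evaluate the left side via Lemma \ref{lembksq} with $l=k$ and the right side via Lemma \ref{lemcnbk}, then solve for $\zeta(2k)$. The extra remarks about $B_k$ being real-valued and the nonnegativity justifying the pairing of $n$ with $-n$ are sensible points of care that the paper leaves implicit.
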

\begin{proof} 
Applying Parseval's identity to $B_k$, $k \geq 1$, we have 
\begin{equation*}
\int_{0}^{1}| B_{k}(t)|^2 dt=\sum_{-\infty}^{\infty} |c_n(B_k)|^2.
\end{equation*}
The value of the left side, given by Lemma \ref{lembksq}, is equal to  
\begin{equation}\label{theorleft}
\int_{0}^{1} B_{k}(t)B_{k}(t) dt=\frac{(-1)^{k-1} (k!)^2B_{2k}}{(2k)!}.
\end{equation}
The sum on the right side, provided by Lemma \ref{lemcnbk}, gives us 
\begin{equation}
\sum_{-\infty}^{\infty} |c_n(B_k)|^2=\sum_{n\neq 0}\left| \frac{-k!}{(2\pi i n)^k}\right| ^2= 2 \frac{(k!)^2}{(2\pi)^{2k}}\sum_{n=1}^\infty \frac 1{n^{2k}}. \label{theorright}
\end{equation}
Putting together \eqref{theorleft} and \eqref{theorright} we  prove \eqref{valuesofzetaat2k}.
\end{proof}

We would like to finish this section with a few remarks on our proof and other related works.

\begin{remark}
Our work is not the first one which extends a method to evaluate the zeta function at two, to a general method to find $\zeta(2k)$, 
and in it to bring in Bernoulli polynomials;
 for example, see \cite{Ciaurri-Navas-Ruiz2015} where a telescoping series technique to find $\zeta(2)$, offered in \cite{Benko2012},  is generalized to find $\zeta(2k)$ using Bernoulli polynomials. 
\end{remark} 

\begin{remark}
Despite the very central role of Bernoulli polynomials in our work, there is nothing that made them unique in this process. 
In fact, there are infinitely many families of functions $f_k$ that can do the job. 
In fact, every function $f_k$ whose  Fourier coefficients are different than that of Bernoulli polynomials by a phase factor, $c_n(f_k)=e^{i\theta_{n,k}}c_n(B_k)$ with $\theta_{n,k}\in [0,2\pi]$, can be used here. 
On the other hand, a closer look at our proof reveals that property \eqref{Berandderivatives} is critical to it. 
For example, $f_k(x)=x^k$ is another  family of functions with the same property.\footnote{While preparing this paper, we became aware of the recently-posted paper \cite{Alladi-Defant2017} where Parseval's identity is applied on $x^k$ to find $\zeta(2k)$.}
\end{remark} 
\begin{remark}
Another technique in evaluating the zeta function  at even integers involves the pointwise convergence of Fourier series $\sum_{n\in \mathbb{Z}} c_n(f)e^{2\pi i n t}$ to $f(t)$. 
For a beautiful instance of this technique see \cite{Murty-Weatherby2016}.
\end{remark}

\begin{remark}
The functional equation of  the Riemann zeta function, 
\begin{equation}
\zeta (s)=2^{s}\pi ^{s-1}\ \sin \left({\frac {\pi s}{2}}\right)\ \Gamma (1-s)\ \zeta (1-s),
\end{equation}
where $\Gamma$ is the gamma function, plays a very important role in the study of the Riemann zeta function.
The functional equation relates the value of the zeta function  at $s$ to its value  at $1-s$.
Hence  we can see now that 
\begin{equation*}
\zeta(-2k+1)=-\frac{B_{2k}}{2k}, \quad k\geq 1
\end{equation*}
In fact, this is true for all negative integers and the odd Bernoulli numbers being zero gives the trivial zeros of the Riemann zeta function at negative even integers.
An interesting study, investigating the relation between the values of zeta at negative integers and functions $S_p$ can be found in \cite{Minac1994}.
\end{remark}


\providecommand{\bysame}{\leavevmode\hbox to3em{\hrulefill}\thinspace}
\providecommand{\MR}{\relax\ifhmode\unskip\space\fi MR }
\providecommand{\MRhref}[2]{%
  \href{http://www.ams.org/mathscinet-getitem?mr=#1}{#2}
}
\providecommand{\href}[2]{#2}

\Addresses

\end{document}